\theoremstyle{definition}
\newtheorem{definition}{Definition}
\newtheorem{theorem}[definition]{Theorem}
\newtheorem{proposition}[definition]{Proposition}
\newtheorem{lemma}[definition]{Lemma}
\theoremstyle{remark}
\newtheorem{remark}[definition]{Remark}
\newcounter{enumctr}
\newcommand{\T}{\mathbb{T}}
\newcommand{\N}{\mathbb{N}}
\newcommand{\R}{\mathbb{R}}
\newcommand{\Z}{\mathbb{Z}}
\newcommand{\id}{\hbox{id}}
\renewcommand{\phi}{\varphi}
\begin{document}

\begin{frontmatter}

\title{Assignability of dichotomy spectrum for discrete time-varying linear control systems}

\author{L.V. Cuong\footnote{Email: cuonglv2@nuce.edu.vn, Department of Information Technology, National University of
Civil Engineering, 55 Giai Phong str., Hanoi, Vietnam}\quad and\quad T.S. Doan\footnote{Email: dtson@math.ac.vn, Institute of Mathematics, Vietnam Academy of Science and Technology, 18 Hoang Quoc Viet, Ha Noi and Thang Long Institute of Mathematics and Applied sciences, Thang Long University, Nghiem Xuan Yem Road, Hoang Mai, Hanoi, Vietnam}}

\begin{abstract}
In this paper, we show that for discrete time-varying linear control systems uniform complete controllability implies arbitrary assignability of dichotomy spectrum of closed-loop systems. This result significantly strengthens the result in \cite{Babiarz} about arbitrary assignability of Lyapunov spectrum of discrete time-varying linear control systems.
\end{abstract}

\begin{keyword}
\texttt{Time-varying systems, Stability, Dichotomy spectrum, Control systems, Lyapunov spectrum}
\MSC[2010] 34A30, 34D09, 34D08, 34H05. 
\end{keyword}

\end{frontmatter}


\section{Introduction}
The notion of dichotomy spectrum of linear time-varying systems initiated from the work of Sacker and Sell in 1970s (see \cite{SackerSell1978}). Since then this notion has played an important role in the qualitative theory of time-varying systems including the stability theory (see \cite{Barreira}), the linearization theory (see \cite{Cuong,Palmer1974}), the invariant manifold theory (see \cite{APS,PS,Barreira}), the normal form theory (see \cite{Siegmund2002b}), the bifurcation theory (see \cite{Rasmussen}), etc....

Due to the wide application of dichotomy spectrum in the qualitative theory of time-varying systems, it is of particular importance to know whether we can control this spectrum. More concretely, we are interested in discrete time-varying linear control system
\[
x_{n+1}=A_nx_n+B_n u_n.
\]
The question is that for a given compact set written as the union of some disjoint intervals  whether there exists a linear feedback $u_n=U_nx_n$ for which the dichotomy spectrum of the closed-loop system
\[
x_{n+1}=(A_n+B_nU_n)x_n
\]
is equal to the given compact set (assignability of dichotomy spectrum). In this paper, we show that uniform complete controllability implies assignability of dichotomy spectrum.

Note that uniform complete controllability is also a sufficient condition for arbitrary assignability of Lyapunov spectrum of time-varying control systems, see  \cite{Popova,Babiarz,Babiarz_2018}. Recall that the  Lyapunov spectrum  of a time-varying system consists of all possible average growth rates of solutions of this system and it is known that the Lyapunov spectrum is a subset of the dichotomy spectrum. Then, our result in assigning dichotomy spectrum implies the result of assigning Lyapunov spectrum in \cite{Babiarz}, see Remark \ref{Comparison} for a more details.

The structure of the paper is follows: The first part of Section \ref{Section2} is devoted to present the basic concept called dichotomy spectrum of discrete time-varying systems (Subsection \ref{Subsection2.1}). The statement of the main result about assignability of dichotomy spectrum is stated in Subsection \ref{Subsection2.2}. The proof of the main result is presented in Subsection \ref{Subsection3.3} of Section \ref{Section3}. The other two subsections of Section \ref{Section3} are preparation for the proof and have the following structure: Subsection \ref{Subsection3.1} is devoted to prove a result on the dichotomy spectrum of upper-triangular discrete time-varying systems, Subsection \ref{Subsection3.2} is used to recall a result in \cite{Babiarz} in  transforming an uniformly completely controllable linear systems to upper-triangular linear systems. In the Appendix, we recall the notion of dichotomy spectrum for continuous time-varying systems. A relation between the dichotomy spectral  of continuous time-varying systems and the associated $1$-time discrete time-varying systems is established  in Lemma \ref{TechnicalLemma}. \\

\noindent
\textbf{Notations}: For $d,s\in\N$, let $\mathcal L^{\infty}(\T,\R^{d\times s})$, where $\T$ stands for $\Z,\Z_{\geq 0}, \Z_{\leq 0}$, denote the space $M=(M_n)_{n\in\T}$ with $M_n\in\R^{d\times s}$ satisfying that
\[
\|M\|_{\infty}:=\sup_{n\in\T} \|M_n\|<\infty.
\]
For $d\in\N$, let $\mathcal L^{\rm Lya}(\T,\R^{d\times d})$ denote the set of all Lyapunov sequences  $M=(M_n)_{n\in\T}$ in $\R^{d\times d}$, i.e. $M\in \mathcal L^{\infty}(\T,\R^{d\times d})$ and its inverse sequence $M^{-1}:=(M_n^{-1})_{n\in\T}$ exists and $M^{-1}\in \mathcal L^{\infty}(\T,\R^{d\times d})$.
\section{Preliminaries and main results}\label{Section2}
\subsection{Dichotomy spectrum of discrete time-varying linear system}\label{Subsection2.1}
Consider discrete time-varying linear system
\begin{equation}\label{ED_01}
x_{n+1}=M_nx_n,\qquad \hbox{for } n\in\Z,
\end{equation}
where $ M:=(M_n)_{n\in\Z}\in \mathcal L^{\rm Lya}(\Z,\R^{d\times d})$. Let $\Phi_M(\cdot,\cdot):\Z\times \Z \rightarrow \R^{d\times d}$ denote the \emph{evolution operator} generated by \eqref{ED_01}, i.e.
\[
\Phi_M(m,n):=
\left\{
\begin{array}{ll}
M_{m}\dots M_{n+1}, & \hbox{ if } m>n,\\[1ex]
\id, & \hbox{ if } m=n,\\[1ex]
M_{m+1}^{-1}\dots M_{n}^{-1}, & \hbox{ if } m<n.
\end{array}
\right.
\]
Next, we introduce the notion of one-sided and two-sided dichotomy spectrum of \eqref{ED_01}. These notions are defined in term of exponential dichotomy. Recall that system \eqref{ED_01} is said to admit an exponential dichotomy on $\T$, where $\T$ is either $\Z,\Z_{\geq 0}$ or $\Z_{\leq 0}$, if there exist $K,\alpha>0$ and a family of projection $(P_n)_{n\in\T}$ in $\R^{d\times d}$ such that for all $m,n\in\T$ we have
\[
\begin{array}{cll}
\|\Phi_M(m,n)P_n\| & \leq K e^{-\alpha(m-n)} & \quad \hbox{ for } m\geq n,\\[1.5ex]
\|\Phi_M(m,n)(\id-P_n)\| & \leq K e^{\alpha(m-n)} & \quad \hbox{ for } m\leq n,
\end{array}
\]
see \cite{Poetyzche}.
\begin{definition}[Dichtomy spectrum for discrete time-varying linear systems]\label{Definition_DiscreteED}
The \emph{dichotomy spectrum} of \eqref{ED_01} on $\Z, \Z_{\geq 0}, \Z_{\leq 0}$ are defined, respectively, as follows
\begin{eqnarray*}
\Sigma_{\rm ED}(M)
&:=&
\big\{\gamma\in\R: x_{n+1}=e^{-\gamma }M_n x_n \hbox{ has no ED on } \Z\big\}, 	\\[1.5ex]
\Sigma_{\rm ED}^{+}(M)
&:=&
\big\{\gamma\in\R: x_{n+1}=e^{-\gamma}M_n x_n \hbox{ has no ED on } \Z_{\geq 0}\big\}, 	\\[1.5ex]
\Sigma_{\rm ED}^{-}(M)
&:=&
\big\{\gamma\in\R: x_{n+1}=e^{-\gamma}M_n x_n \hbox{ has no ED on } \Z_{\leq 0}\big\}. 	
	\end{eqnarray*}
\end{definition}

\begin{remark}
In \cite{Aulbach,Poetyzche}, the definition of dichotomy spectrum is slightly differential to Definition \ref{Definition_DiscreteED} in which the authors consider the shifted systems of the form 
\[
x_{n+1}=\frac{1}{\beta}M_nx_n,\qquad\hbox{where } \beta\in (0,\infty).
\]
Since there is an one-to-one correspondence between $\beta\in (0,\infty)$ and $e^{-\gamma}$, where $\gamma\in \R$, there is an one-to-one correspondence between the spectral in Definition \ref{Definition_DiscreteED} and the ones introduced in \cite{Aulbach,Poetyzche}.
\end{remark}
Thanks to the above Remark and the spectral theorem proved in \cite{Aulbach,Poetyzche}, the spectrum $\Sigma_{\rm ED}(M)$ (also $\Sigma_{\rm ED}^{+}(M)$ and $\Sigma_{\rm ED}^{-}(M)$) is given as the union of at most $d$ disjoint intervals. The corresponding notions of dichotomy spectrum of continuous time-varying linear systems are introduced in the Appendix.
\subsection{Setting and the statement of the main result}\label{Subsection2.2}
Consider a discrete time-varying linear control system
\begin{equation}\label{MainEq}
x_{n+1}=A_nx_n+ B_n u_n,
\end{equation}
where $A=(A_n)_{n\in\mathbb Z}\in \mathcal L^{\rm Lya}(\Z,\R^{d\times d}), B=(B_n)_{n\in\mathbb Z}\in \mathcal L^{\infty}(\Z,\R^{d\times s})$. Let $x(\cdot,n,\xi,u)$ denote the solution of \eqref{MainEq} satisfying that $x(n)=\xi$. Now, we recall the notion of uniform complete controllability of \eqref{MainEq}, see also \cite{Babiarz}.
\begin{definition}[Uniform complete controllability]\label{UniformControllability}
System \eqref{MainEq} is called \emph{uniformly completely controllable} if there exist a positive $\alpha$ and a natural number $K$ such that for all $\xi\in\R^d$ and $k_0\in\Z$ there exists a control sequence $u_n, n=k_0,k_0+1,\dots,k_0+K-1$ such that
\[
x(k_0+K,k_0,0,u)=\xi
\]
and
\[
\|u_n\|\leq \alpha \|\xi\|\qquad\hbox{for all } n=k_0,k_0+1,\dots,k_0+K-1.
\]
\end{definition}
For a bounded sequence of linear feedback control $U=(U_n)_{n\in\Z}\in\mathcal L^{\infty}(\Z,\R^{s\times d})$, the corresponding closed-loop system is
\begin{equation}\label{Closedloop}
x_{n+1}=(A_n+B_n U_n)x_n.
\end{equation}
In the case that $A+BU\in \mathcal L^{\rm Lya}(\Z,\R^{d\times d})$, the dichotomy spectrum of \eqref{Closedloop} is denoted by $\Sigma_{\mathrm{ED}}(A+BU)$.
\begin{definition} The dichotomy spectrum of \eqref{Closedloop} is called \emph{assignable} if for arbitrary disjoint closed intervals $[a_1,b_1],\dots,[a_{\ell},b_{\ell}]$, where $1\leq \ell\leq d$, there exists a bounded linear feedback control $U\in L^{\infty}(\Z,\R^{s\times d})$ such that $A+BU\in \mathcal L^{\rm Lya}(\Z,\R^{d\times d})$ and
	\[
	\Sigma_{\mathrm{ED}}(A+BU)=\bigcup_{i=1}^{\ell}[a_i,b_i].
	\]
\end{definition}
We now state the main result of this paper.
\begin{theorem}[Assignability for dichotomy spectrum of discrete time-varying linear systems]\label{MainTheorem}
Suppose that system \eqref{MainEq} is uniformly completely controllable. Then, the dichotomy spectrum of \eqref{Closedloop} is assignable.
\end{theorem}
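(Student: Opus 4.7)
The plan is to reduce the problem, via a Lyapunov transformation, to the case of upper-triangular closed-loop systems, and then to independently design each scalar diagonal entry. First, I would invoke the transformation recalled in Subsection~\ref{Subsection3.2}: under uniform complete controllability one can find $L\in\mathcal L^{\rm Lya}(\Z,\R^{d\times d})$ which conjugates \eqref{MainEq} to an equivalent control system whose free dynamics $\widetilde A=L^{-1}(A L)_{\text{shifted}}$ is upper-triangular and for which any bounded diagonal choice of feedback in the new coordinates comes from some bounded feedback $U$ in the original coordinates. Because Lyapunov conjugations preserve the dichotomy spectrum, the assignability question for \eqref{Closedloop} reduces to the analogous assignability question for an upper-triangular closed-loop system.

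Next, I would apply the spectral result to be proved in Subsection~\ref{Subsection3.1}, which identifies the dichotomy spectrum of an upper-triangular Lyapunov sequence with the union of the dichotomy spectra of its scalar diagonal entries. Consequently, it is enough, given the target intervals $[a_1,b_1],\dots,[a_\ell,b_\ell]$, to design scalar positive sequences $c^{(1)},\dots,c^{(d)}$ (distributed with appropriate multiplicities across the $d$ diagonal slots) such that for each $j$ the one-dimensional system $y_{n+1}=c^{(j)}_n y_n$ has dichotomy spectrum exactly one of the prescribed intervals $[a_i,b_i]$, with every interval realised at least once. The feedback $U$ is then obtained by choosing the diagonal of $A+BU$ (in the upper-triangular coordinates) to be the concatenated $c^{(j)}$'s, which is legitimate precisely because uniform complete controllability, via the transformation of Subsection~\ref{Subsection3.2}, gives full freedom on the diagonal.

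The main technical obstacle is the scalar construction: producing a bounded, boundedly-invertible sequence $(c_n)$ whose dichotomy spectrum is exactly a prescribed closed interval $[a,b]$ (not merely a prescribed Lyapunov exponent, as in \cite{Babiarz}). The natural strategy is to take $\log c_n$ to alternate between $a$ and $b$ on consecutive blocks whose lengths grow without bound. Growing block lengths force, for every $\gamma\in[a,b]$, arbitrarily long stretches on which $\prod_k e^{-\gamma}c_k$ both grows and decays exponentially, which rules out any exponential dichotomy of the $\gamma$-shifted system; boundedness of $\log c_n$ in $[a,b]$ forces ED for every $\gamma\notin[a,b]$. Verifying these two statements rigorously, using only the definition of ED from Subsection~\ref{Subsection2.1}, is where the bulk of the work lies. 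The continuous–discrete correspondence of Lemma~\ref{TechnicalLemma} in the Appendix may be used as a convenient shortcut, by constructing a piecewise-constant continuous scalar system with the desired interval spectrum and sampling at integer times.

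Finally, one has to check the bookkeeping: the diagonal feedback pulled back through $L$ yields $U\in\mathcal L^{\infty}(\Z,\R^{s\times d})$ and $A+BU\in\mathcal L^{\rm Lya}(\Z,\R^{d\times d})$, which follows from boundedness of $L,L^{-1}$ and from the strict positivity and boundedness of the chosen $c^{(j)}_n$. Combining the three ingredients (Subsections~\ref{Subsection3.2}, \ref{Subsection3.1}, and the scalar construction) then gives a feedback $U$ with $\Sigma_{\rm ED}(A+BU)=\bigcup_{i=1}^{\ell}[a_i,b_i]$, establishing Theorem~\ref{MainTheorem}.
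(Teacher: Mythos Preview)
Your plan is essentially the paper's: invoke Theorem~\ref{Upper_Theorem} to pass to an upper-triangular closed-loop system with freely prescribed scalar diagonals, build each diagonal sequence from blocks alternating between $e^{a_i}$ and $e^{b_i}$ of growing length so that its spectrum is $[a_i,b_i]$, and then read off $\Sigma_{\rm ED}(C)$ from the diagonal via Subsection~\ref{Subsection3.1}.

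There is one genuine subtlety you gloss over. You state that Subsection~\ref{Subsection3.1} ``identifies the dichotomy spectrum of an upper-triangular Lyapunov sequence with the union of the dichotomy spectra of its scalar diagonal entries,'' but Theorem~\ref{KeyTheorem} only yields the sandwich
\[
\bigcup_{i=1}^d \Sigma^{\pm}_{\rm ED}(p^i)\ \subset\ \Sigma_{\rm ED}(C)\ \subset\ \bigcup_{i=1}^d \Sigma_{\rm ED}(p^i),
\]
and the left-hand side can be strictly smaller than the right in general. The equality you need is the content of Proposition~\ref{MainProposition}, and it is obtained precisely by imposing the symmetry $p^i_n=p^i_{-n}$, which forces $\Sigma_{\rm ED}(p^i)=\Sigma^{+}_{\rm ED}(p^i)$ and hence closes the sandwich. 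Your growing-block construction is compatible with this, but you must build it symmetrically about $n=0$ (as the paper does, with blocks $[2^{2m},2^{2m+1})$ and reflection to $\Z_{\le 0}$), or at least ensure growing blocks on \emph{both} half-lines, so that each one-sided spectrum already equals $[a_i,b_i]$. Without that, the off-diagonal entries of $C$---which Theorem~\ref{Upper_Theorem} does not let you control---could in principle make $\Sigma_{\rm ED}(C)$ strictly smaller than $\bigcup_i[a_i,b_i]$.
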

\begin{remark}\label{Comparison}
(i) Recall that for a discrete time-varying linear system
\begin{equation}\label{Remark_Eq1}
x_{n+1}=M_nx_n,\qquad \hbox{where } M:=(M_n)_{n\in\Z}\in \mathcal L^{\rm Lya}(\Z,\R^{d\times d}),
\end{equation}
the \emph{Lyapunov exponent} of a non-trivial solution $\Phi_M(n,0)\xi$ of \eqref{Remark_Eq1} is given by
\[
\chi(\xi):=\limsup_{n\to\infty}\frac{1}{n}\log\|\Phi_M(n,0)\xi\|.
\]
The Lyapunov spectrum of \eqref{Remark_Eq1} is defined as
\[
\Sigma_{\rm Lya}(M):=\bigcup_{0\not=\xi\in\R^d} \chi(\xi).
\]
It is known that $\Sigma_{\rm Lya}(M)$ consists of at most $d$ elements (cf. \cite[Chapter II]{Andrianova}). Furthermore, suppose that  $\Sigma_{\rm ED}(M)$ is represented as a disjoint union  of $\ell$ intervals $\bigcup_{i=1}^{\ell}[a_i,b_i]$. Then,
\begin{equation}\label{Remark_Eq2}
\Sigma_{\rm Lya}(M)\subset \Sigma_{\rm ED}(M),\qquad \Sigma_{\rm Lya}(M)\cap [a_i,b_i]\not=\emptyset,
\end{equation}
see, e.g. \cite{Johnson}.

(ii) Suppose that system \eqref{MainEq} is uniformly completely controllable. Now, let $\{\lambda_1,\dots,\lambda_{\ell}\}$ be an arbitrary set of $\ell$ real numbers, where $1\leq \ell\leq d$. Let $a_i=b_i=\lambda_i$ for $1\leq i\leq \ell$. By virtue of Theorem \ref{MainTheorem}, there exists a bounded linear feedback control $U=(U_n)_{n\in\Z}$ such that $A+BU\in \mathcal L^{\rm Lya}(\Z,\R^{d\times d})$ and $\Sigma_{\rm ED}(A+BU)=\bigcup_{i=1}^{\ell}\{\lambda_i\}$. This together with \eqref{Remark_Eq2} implies that
\[
\Sigma_{\rm ED}(A+BU)=\Sigma_{\rm Lya}(A+BU)=\bigcup_{i=1}^{\ell}\{\lambda_i\}.
\]
Consequently, for discrete time-varying linear control systems assignability of dichotomy spectrum implies assignability of Lyapunov spectrum.
\end{remark}
\section{Proof of the main results}\label{Section3}
The main ingredient of the proof consists of two parts. In the first part, we extent a result in \cite{Battelli} to obtain an explicit computation of the dichotomy spectrum of a special upper-triangular linear difference system. Concerning the second part, we first extend the result in \cite[Theorem 4.6]{Babiarz} to two-sided linear systems and then use this result to find a suitable linear feedback control such that the closed-loop system \eqref{Closedloop} is kinematically equivalent to an upper traingular linear difference system.

\subsection{Dichotomy spectrum of upper-triangular linear difference systems}\label{Subsection3.1}
In the first part of this subsection, we extend a part of the result about the presentation of dichotomy spectrum of a block upper-triangular differential equations in terms of the dichotomy spectrum of subsystems in \cite{Battelli} to discrete time-varying systems. To do this, we recall this result for continuous time-varying systems.
\begin{theorem}\label{TechnicalTheorem}
Consider an upper-triangular linear differential equation
\begin{equation*}\label{BlockForm_DifferentialEquations}
\dot x(t)=W(t)x(t),\qquad\hbox{where } W(t)=
\left(
\begin{array}{cc}
X(t) & Z(t)\\[0.5ex]
0 & Y(t)
\end{array}
\right),
\end{equation*}
where $X:\R\rightarrow \R^{k\times k}, Y: \R\rightarrow \R^{(d-k)\times (d-k)}, Z:\R\rightarrow \R^{k\times (d-k)}$ are measurable and essentially bounded. Then,
\begin{equation*}\label{Relation_ContinousTime}
\Sigma^{\pm}_{\rm ED}(X)\cup\Sigma^{\pm}_{\rm ED}(Y)
\subset
\Sigma_{\rm ED}(W)\subset
\Sigma_{\rm ED}(X)\cup\Sigma_{\rm ED}(Y),
\end{equation*}
where $\Sigma^{\pm}_{\rm ED}(X):=\Sigma^{+}_{\rm ED}(X)\cup \Sigma^{-}_{\rm ED}(X), \Sigma^{\pm}_{\rm ED}(Y):=\Sigma^{+}_{\rm ED}(Y)\cup \Sigma^{-}_{\rm ED}(Y)$.
\end{theorem}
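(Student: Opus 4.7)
The plan is to exploit the block upper-triangular structure, which by variation of parameters gives
\[
\Phi_W(t,s) = \begin{pmatrix} \Phi_X(t,s) & G(t,s) \\ 0 & \Phi_Y(t,s) \end{pmatrix}, \qquad G(t,s) = \int_s^t \Phi_X(t,r)\,Z(r)\,\Phi_Y(r,s)\, dr,
\]
and to note that subtracting $\gamma I$ from $W$ acts as $X-\gamma I$, $Y-\gamma I$ on the diagonal blocks while preserving the block form. The problem then reduces to two separate implications about existence of exponential dichotomies of the shifted systems.

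For the right inclusion I would argue by contrapositive: assuming $X-\gamma I$ and $Y-\gamma I$ both admit EDs on $\R$, I would construct a Lyapunov transformation
\[
T(t) = \begin{pmatrix} I & H(t) \\ 0 & I \end{pmatrix}
\]
that decouples $W-\gamma I$ into $\mathrm{diag}(X-\gamma I, Y-\gamma I)$. This amounts to finding a bounded solution $H(\cdot)$ of the Sylvester-type ODE $\dot H = (X-\gamma I)H - H(Y-\gamma I) - Z$, and such $H$ is produced from the dichotomy projections of the two subsystems via the standard Green's-function construction; the relevant integrals converge absolutely because the dichotomy decay rates dominate on each side of the diagonal. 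Since Lyapunov transformations preserve the dichotomy spectrum and a block-diagonal system has spectrum equal to the union of its blocks' spectra, we obtain $\gamma\notin \Sigma_{\rm ED}(W)$.

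For the left inclusion, suppose $W-\gamma I$ admits an ED on $\R$ with projection $P_W(t)$. The subspace $V:=\R^k\times\{0\}$ is $\Phi_W$-invariant, and the quotient dynamics on $\R^d/V$ is kinematically identified with $\Phi_Y$. On each half-line $\R_{\ge 0}$ and $\R_{\le 0}$ separately, I would take $\mathrm{Im}\,P_W(t)\cap V$ as the stable subspace of $X-\gamma I$ and the image of $\mathrm{Im}\,P_W(t)$ under the quotient map as the stable subspace of $Y-\gamma I$; on a half-line any complementary subspace can be used to complete the projection, and the exponential estimates for both subsystems descend directly from those of $W$. This yields $\gamma\notin \Sigma^{+}_{\rm ED}(X)\cup\Sigma^{-}_{\rm ED}(X)\cup\Sigma^{+}_{\rm ED}(Y)\cup\Sigma^{-}_{\rm ED}(Y)$.

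I expect the main obstacle to lie in the left inclusion: one must verify that the restriction-to-$V$ and quotient-by-$V$ constructions really produce stable subspaces of the correct dimensions that split the ambient space, and one must track carefully the reason only one-sided dichotomies for the subsystems can be recovered. The point is that the dimensions of the extracted stable subspaces $\mathrm{Im}\,P_W(t)\cap V$ may differ across the two half-lines, so the half-line projections need not combine into a single projection on all of $\R$ — which is precisely what forces the appearance of $\Sigma^{\pm}$ rather than $\Sigma$ on the left-hand side of the claim.
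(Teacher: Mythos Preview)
The paper does not give its own argument for this theorem; its entire proof reads ``See \cite[Section 4]{Battelli}.'' Your outline is essentially the Battelli--Palmer strategy: the right inclusion is obtained by block-diagonalising $W-\gamma I$ via a bounded solution of the Sylvester equation whenever both diagonal blocks have an exponential dichotomy on $\R$, and the left inclusion is obtained, on each half-line separately, by restricting to the invariant subspace $V=\R^k\times\{0\}$ for the $X$-block and passing to the quotient $\R^d/V$ for the $Y$-block. Your explanation of why only the one-sided spectra $\Sigma^{\pm}$ appear on the left --- the ranks of $\mathrm{Im}\,P_W\cap V$ on the two half-lines need not agree, so the extracted half-line projections need not glue to a full-line one --- is exactly the obstruction Battelli and Palmer identify.

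Two small points. First, with your choice of $T$ the off-diagonal block of $T^{-1}(W-\gamma I)T - T^{-1}\dot T$ is $XH - HY + Z - \dot H$, so the Sylvester equation should carry $+Z$ rather than $-Z$; the $\gamma$'s cancel as you note. Second, the sentence ``the relevant integrals converge absolutely because the dichotomy decay rates dominate on each side of the diagonal'' hides the one genuinely delicate step: a single splitting using only the $X$-projection (or only the $Y$-projection) does not give convergent integrals in general, since e.g.\ $\Phi_X(t,s)P_X(s)$ decays forward but $\Phi_Y(s,t)$ alone has no bound. One must split the inhomogeneity using \emph{both} pairs $P_X,Q_X$ and $P_Y,Q_Y$ simultaneously, giving four pieces each of which has a definite exponential rate; this is what Battelli and Palmer (and Coppel before them) actually do. Once that is spelled out, your plan goes through.
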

\begin{proof}
See \cite[Section 4]{Battelli}.
\end{proof}
Consider discrete time-varying system
\begin{equation}\label{BlockForm}
x_{n+1}=D_nx_n,\qquad\hbox{where } D_n=
\left(
\begin{array}{cc}
A_n & C_n\\[0.5ex]
0 & B_n
\end{array}
\right),
\end{equation}
where $A=(A_n)_{n\in\Z}\in\ \mathcal L^{\rm Lya}(\Z,\R^{k\times k}), B=(B_n)_{n\in\Z}\in\mathcal L^{\rm Lya}(\Z,\R^{(d-k)\times (d-k)})$, and $C=(
C_n)_{n\in\Z}\in\mathcal L^{\infty}(\Z,\R^{k\times (d-k)})$.
\begin{theorem}[Dichotomy spectrum of upper-triangular discrete time-varying linear systems]\label{KeyTheorem}
Let $\Sigma_{\rm ED}(D)$ denote the dichotomy spectrum of \eqref{BlockForm}. Then,
\begin{equation}\label{Relation}
\Sigma^{\pm}_{\rm ED}(A)\cup\Sigma^{\pm}_{\rm ED}(B)
\subset
\Sigma_{\rm ED}(D)\subset
 \Sigma_{\rm ED}(A)\cup\Sigma_{\rm ED}(B),
\end{equation}
where $\Sigma^{\pm}_{\rm ED}(A):=\Sigma^{+}_{\rm ED}(A)\cup \Sigma^{-}_{\rm ED}(A), \Sigma^{\pm}_{\rm ED}(B):=\Sigma^{+}_{\rm ED}(B)\cup \Sigma^{-}_{\rm ED}(B)$.
\end{theorem}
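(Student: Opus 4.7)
The plan is to reduce the discrete assertion to its continuous-time counterpart, Theorem \ref{TechnicalTheorem}, via the correspondence between a continuous time-varying linear system and its $1$-time discretization recorded in Lemma \ref{TechnicalLemma}. Concretely, I would manufacture a continuous-time block upper-triangular system $\dot{x}(t)=W(t)x(t)$ on $\R$ whose period-one evolution operators recover the discrete matrices $D_n$, $A_n$, and $B_n$ blockwise, apply Theorem \ref{TechnicalTheorem} to that continuous system, and then pull the resulting inclusions back to the discrete setting using Lemma \ref{TechnicalLemma}.

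For the construction, on each unit interval $[n,n+1]$ I would choose measurable, essentially bounded coefficient matrices $X(t)\in\R^{k\times k}$, $Y(t)\in\R^{(d-k)\times(d-k)}$, $Z(t)\in\R^{k\times (d-k)}$ so that
\[
W(t)=\begin{pmatrix} X(t) & Z(t) \\ 0 & Y(t) \end{pmatrix}
\]
and the fundamental matrices satisfy $\Phi_X(n+1,n)=A_n$, $\Phi_Y(n+1,n)=B_n$ and consequently $\Phi_W(n+1,n)=D_n$. This is possible because each $A_n$ and $B_n$ is invertible, and the uniform bounds on $(A_n)$, $(A_n^{-1})$, $(B_n)$, $(B_n^{-1})$, $(C_n)$ transfer to uniform essential bounds on $X, Y, Z$.

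With $W$ in hand, Theorem \ref{TechnicalTheorem} yields
\[
\Sigma^{\pm}_{\rm ED}(X)\cup\Sigma^{\pm}_{\rm ED}(Y)\subset\Sigma_{\rm ED}(W)\subset\Sigma_{\rm ED}(X)\cup\Sigma_{\rm ED}(Y).
\]
By construction the $1$-time discretizations of $X$, $Y$, $W$ are precisely $A$, $B$, $D$, so Lemma \ref{TechnicalLemma} identifies the one-sided and two-sided continuous dichotomy spectra of $X,Y,W$ with the corresponding discrete spectra of $A,B,D$. Substituting these identifications into the chain above delivers exactly \eqref{Relation}.

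The main obstacle will be the construction step: realizing a prescribed invertible real matrix $A_n$ as the period-one map of a bounded real ODE is subtle when $\det A_n<0$, or more generally when $A_n$ fails to admit a real logarithm. A standard remedy is to split each unit interval into two halves and to patch together two paths in the group of invertible matrices so that the composed time-one map equals $A_n$; uniformity in $n$ must be watched closely so that the essential bounds on $X,Y,Z$ survive. Once the construction is in place (and Lemma \ref{TechnicalLemma} is presumed to package this discretization correspondence cleanly), the remainder of the argument is a direct translation of the continuous-time inclusions into the discrete ones.
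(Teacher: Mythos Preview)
Your approach is exactly the paper's: manufacture a continuous block upper-triangular system $\dot x=W(t)x$ whose $1$-time discretization reproduces $x_{n+1}=D_nx_n$ (and blockwise $A_n$, $B_n$), invoke Theorem~\ref{TechnicalTheorem} for $W$, and then translate the inclusions back via Lemma~\ref{TechnicalLemma}. The only difference lies in the construction step. The paper takes the shortest route, setting $X(t)=A_n$, $Y(t)=B_n$, $Z(t)=C_n$ for $t\in[n,n+1)$ and declaring that \eqref{BlockForm} is the associated $1$-time discrete system; but with that choice the time-one map on $[n,n+1)$ is $e^{D_n}$, not $D_n$, so the paper's ``obviously'' hides precisely the issue you flag. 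Your proposal to realize each $A_n$, $B_n$ as a genuine time-one map of a bounded real ODE (handling the absence of a real logarithm by splitting the interval, and checking uniformity in $n$) is the correct repair, after which the rest of the argument is identical to the paper's.
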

\begin{proof}
Define a measurable and bounded function $W:\R\rightarrow \R^{d\times d}$ of the form $W(t)=\left(
\begin{array}{cc}
X(t) & Z(t)\\[0.5ex]
0 & Y(t)
\end{array}
\right)$, where
\[
X(t)= A_n, Y(t)=B_n, Z(t)=C_n\qquad \hbox{ for } t\in [n,n+1), n\in\Z.
\]
Obviously, equation \eqref{BlockForm} is the $1$-time discrete time-varying systems associated with
\begin{equation*}\label{Cont_Eq1}
\dot x=W(t)x,\qquad \hbox{ where } t\in\R,
\end{equation*}
(see Appendix for the notion of the associated $1$-time discrete time-varying systems). Then, by virtue of Lemma \ref{TechnicalLemma} we have
\begin{equation}\label{New_Eq1}
\begin{array}{ccc}
\Sigma^{\pm}_{\rm ED}(A)\cup\Sigma^{\pm}_{\rm ED}(B)&=& \Sigma^{\pm}_{\rm ED}(X)\cup\Sigma^{\pm}_{\rm ED}(Y),\\[1ex]
\Sigma_{\rm ED}(D)&=& \Sigma_{\rm ED}(W),\\[1ex]
\Sigma_{\rm ED}(A)\cup\Sigma_{\rm ED}(B)&=&\Sigma_{\rm ED}(X)\cup\Sigma_{\rm ED}(Y).
\end{array}
\end{equation}
On the other hand, by definition of $W(t)$ and Theorem \ref{TechnicalTheorem} we have
\[
\Sigma^{\pm}_{\rm ED}(X)\cup\Sigma^{\pm}_{\rm ED}(Y)
\subset
\Sigma_{\rm ED}(W)\subset
 \Sigma_{\rm ED}(X)\cup\Sigma_{\rm ED}(Y),
\]
which together with \eqref{New_Eq1} proves \eqref{Relation}. The proof is complete.
\end{proof}
In the final part of this subsection, we study a special class of upper triangular discrete time-varying systems whose dichotomy spectrum are given as the union of the dichotomy spectrum of the subsystems corresponding to diagonal entries. More concretely,  let $(p^1_n)_{n\in\Z},(p^2_n)_{n\in\Z},\dots,(p^d_n)_{n\in\Z}$ be scalar Lyapunov sequences satisfying that
\begin{equation}\label{Eq1}
p^i_{n}=p^i_{-n}\qquad\hbox{ for all } n\in\Z, i=1,\dots,d.
\end{equation}
For each $i=1,\dots,d$, we denote by $\Sigma_{\rm ED}(p^i)$ the dichotomy spectrum of the scalar linear system
\[
z_{n+1}= p^i_n z_n\qquad\hbox{ for } n\in\Z.
\]
\begin{proposition}\label{MainProposition}
Let $(C_n)_{n\in\Z}$, where $C_n=(c^{(n)}_{ij})_{1\leq i,j\leq d}$, be an arbitrary bounded sequence of upper-triangular matrices in $\R^{d\times d}$ satisfying that
\[
c^{(n)}_{ii}= p^i_n\qquad \hbox{ for all } n\in \Z, i=1,\dots,d.
\]
Then, the dichotomy spectrum $\Sigma_{\rm ED}(C)$ of the system $x_{n+1}=C_n x_n$ is given by
\[
\Sigma_{\rm ED}(C)=\bigcup_{i=1}^d \Sigma_{\rm ED}(p^i).
\]
\end{proposition}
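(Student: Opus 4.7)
My plan is to induct on $d$, with Theorem \ref{KeyTheorem} as the engine. The hypothesis $p^i_n=p^i_{-n}$ enters through the following scalar symmetry lemma, which I establish first:

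\emph{For each scalar Lyapunov sequence $q$ with $q_n=q_{-n}$, $\Sigma^+_{\rm ED}(q)=\Sigma^-_{\rm ED}(q)=\Sigma_{\rm ED}(q)$.}

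Indeed, the symmetry forces the forward product $q_0q_1\cdots q_{m-1}$ and the backward product $q_{-m}q_{-m+1}\cdots q_{-1}=q_mq_{m-1}\cdots q_1$ to differ by the multiplicative factor $q_m/q_0$, which is bounded above and below by the Lyapunov hypothesis. Hence the forward and backward exponential growth rates of the scalar system coincide; and since in the scalar case the dichotomy projection takes only the values $0$ or $1$, an ED of $e^{-\gamma}q_n$ on $\Z_{\geq 0}$ is automatically matched by a compatible ED on $\Z_{\leq 0}$, and the two glue into an ED on $\Z$.

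The induction then proceeds as follows. The base case $d=1$ is the scalar lemma. For $d\geq 2$, split $C_n=\begin{pmatrix}p^1_n & * \\ 0 & C'_n\end{pmatrix}$ where $C'_n$ is upper-triangular of size $d-1$ with diagonals $p^2,\dots,p^d$ (and inherits the Lyapunov property, since the inverse of a triangular matrix with bounded entries and diagonal bounded away from zero is triangular with bounded entries). Theorem \ref{KeyTheorem} then yields the sandwich
\[
\Sigma^{\pm}_{\rm ED}(p^1)\cup\Sigma^{\pm}_{\rm ED}(C')\subset\Sigma_{\rm ED}(C)\subset\Sigma_{\rm ED}(p^1)\cup\Sigma_{\rm ED}(C').
\]
The scalar lemma gives $\Sigma^{\pm}_{\rm ED}(p^1)=\Sigma_{\rm ED}(p^1)$, and the inductive hypothesis gives $\Sigma_{\rm ED}(C')=\bigcup_{i=2}^d\Sigma_{\rm ED}(p^i)$. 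To make both ends of the sandwich coincide I strengthen the inductive hypothesis to include the inclusion $\bigcup_{i=1}^d\Sigma_{\rm ED}(p^i)\subset\Sigma^{+}_{\rm ED}(C)\cap\Sigma^{-}_{\rm ED}(C)$; combined with the always-valid inclusion $\Sigma^{\pm}_{\rm ED}(C)\subset\Sigma_{\rm ED}(C)$, this yields $\Sigma^{\pm}_{\rm ED}(C')=\Sigma_{\rm ED}(C')$, and the sandwich collapses to the desired equality.

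The main obstacle is carrying the strengthened inclusion $\bigcup\Sigma_{\rm ED}(p^i)\subset\Sigma^{\pm}_{\rm ED}(C)$ through the induction: the two-sided statement of Theorem \ref{KeyTheorem} alone is not sufficient. What is needed is the one-sided analog of its lower bound, namely that an ED on $\Z_{\geq 0}$ of a block upper-triangular system descends to EDs on $\Z_{\geq 0}$ of both diagonal blocks (by restricting to the invariant subspace and by passing to the quotient), and likewise on $\Z_{\leq 0}$. This one-sided version is obtained by the same arguments as Theorem \ref{KeyTheorem} and its continuous counterpart in \cite{Battelli}; once it is in hand, it immediately closes the induction.
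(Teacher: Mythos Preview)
Your proposal is correct and follows the same strategy as the paper: iterate Theorem~\ref{KeyTheorem} to obtain the sandwich $\bigcup_i\Sigma^{\pm}_{\rm ED}(p^i)\subset\Sigma_{\rm ED}(C)\subset\bigcup_i\Sigma_{\rm ED}(p^i)$, then collapse it via the scalar symmetry lemma $\Sigma_{\rm ED}(p^i)=\Sigma^{\pm}_{\rm ED}(p^i)$ coming from $p^i_n=p^i_{-n}$. The only difference is presentational: the paper asserts the full sandwich in one line ``Using Theorem~\ref{KeyTheorem}'' and then proves the scalar lemma, whereas you run a formal induction on $d$ and explicitly flag the need for a one-sided analogue of the lower bound in Theorem~\ref{KeyTheorem} --- a point the paper leaves implicit (it is contained in the cited source \cite{Battelli}).
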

\begin{proof}
Using Theorem \ref{KeyTheorem}, we obtain that
\[
\bigcup_{i=1}^d \Sigma_{\rm ED}^{\pm}(p^i)\subset\Sigma_{\rm ED}(C)\subset \bigcup_{i=1}^d \Sigma_{\rm ED}(p^i),
\]
where $\Sigma_{\rm ED}^{\pm}(p^i)=\Sigma_{\rm ED}^{+}(p^i)\cup \Sigma_{\rm ED}^{-}(p^i)$. Thus, to complete the proof it is sufficient to show that
\begin{equation}\label{Aim}
\Sigma_{\rm ED}(p^i)\subset \Sigma_{\rm ED}^{\pm}(p^i)\qquad\hbox{ for all } i=1,\dots,d.
\end{equation}
For this purpose, let $i\in\{1,\dots,d\}$ and $\gamma\not\in \Sigma_{\rm ED}^{+}(p^i)$ be arbitrary. Then, by Definition \ref{Definition_DiscreteED} one of the following alternatives holds:\\

\noindent
\emph{(A1)} There exist $K,\alpha>0$ such that
\begin{equation}\label{A1}
|p^i_{m-1}\dots p^i_n|\leq K e^{(\gamma-\alpha)(m-n)}
\qquad\hbox{ for } m,n\in\Z_{\geq 0}\hbox{ with } m\geq n.
\end{equation}
Thus, by \eqref{Eq1} we also have that
\[
|p^i_{m-1}\dots p^i_n|=
\left\{
\begin{array}{ll}
 |p^i_{m-1}\dots p^i_0||p^i_{1}\dots p^i_{-n}|\leq K^2 e^{(\gamma-\alpha)(m-n)} & \hbox{ for } m\geq 0\geq n,\\[1.5ex]
|p^i_{-(m-1)}\dots p^i_{-n}|\leq K e^{(\gamma-\alpha)(m-n)} & \hbox{ for } 0\geq m \geq n.
\end{array}
\right.
\]
It means that the shifted system
\[
z_{n+1}=e^{-\gamma}p^i_nz_n,\qquad\hbox{where } n\in\Z
\]
exhibits an exponential dichotomy on $\Z$. Consequently, $\gamma\not\in \Sigma_{\rm ED}(p^i)$.\\

\noindent
\emph{(A2)} There exist $K,\alpha>0$ such that
\begin{equation*}
\left|\frac{1}{p^i_{m}}\dots \frac{1}{p^i_{n-1}}\right|\leq K e^{(\gamma+\alpha)(m-n)}
\qquad\hbox{ for } m,n\in\Z_{\geq 0}\hbox{ with } m\leq n,
\end{equation*}
which implies that
\begin{equation*}
\left|p^i_{m}\dots p^i_{n-1}\right|\geq \frac{1}{K} e^{(\gamma+\alpha)(n-m)}
\qquad\hbox{ for } m,n\in\Z_{\geq 0}\hbox{ with } n\geq m,
\end{equation*}
Thus, by \eqref{Eq1} we also have that
\[
\left|p^i_{m}\dots p^i_{n-1}\right|
=
\left\{
\begin{array}{ll}
 |p^i_{-m}\dots p^i_{-1}||p^i_{0}\dots p^i_{n-1}|\geq \frac{1}{K^2} e^{\gamma(m-n)} & \hbox{ for } n\geq 0\geq m,\\[1.5ex]
|p^i_{-m}\dots p^i_{-(n-1)}|\geq \frac{1}{K} e^{\gamma(m-n)} & \hbox{ for } 0\geq n\geq m.
\end{array}
\right.
\]
It means that the shifted system
\[
z_{n+1}=e^{-\gamma}p^i_nz_n,\qquad\hbox{where } n\in\Z
\]
exhibits an exponential dichotomy on $\Z$. Therefore, in this alternative we also arrive at $\gamma\not\in \Sigma_{\rm ED}(p^i)$.

Since $\gamma\not\in  \Sigma_{\rm ED}^{+}(p^i)$ is arbitrary it follows that $\Sigma_{\rm ED}(p^i)\subset \Sigma_{\rm ED}^{+}(p^i)$. This shows \eqref{Aim} and the proof is complete.
\end{proof}

\subsection{Upper-triangularization of uniformly completely controllable systems}\label{Subsection3.2}
Recall that two discrete time-varying linear systems
\[
x_{n+1}=A_n x_n,\quad y_{n+1}=B_n y_n\qquad \hbox{ for } n\in\T \quad (\T~ \hbox{stands for} ~\Z_{\ge 0}~\hbox{or}~ \Z),
\]
where $(A_n)_{n\in\T}, (B_n)_{n\in\T}\in \mathcal L^{\rm Lya}(\T,\R^{d\times d})$, are called \emph{kinematically equivalent} (or also called dynamically equivalent) if there exists a transformation $(T_n)_{n\in\T}\in \mathcal L^{\rm Lya}(\T,\R^{d\times d})$ such that
\[
A_nT_n=T_{n+1}B_n\qquad\hbox{ for all } n\in\T.
\]
As was proved in \cite[Theorem 4.6]{Babiarz} that for an uniformly completely controllable one sided discrete time-varying control system and a given diagonal discrete time-varying system, there is a bounded feedback control such that the corresponding closed-loop system is dynamically equivalent to an upper-triangular system whose diagonal part coincides with the given diagonal system. Under a slight modification, this result can be extended to two-sided discrete time-varying control system and we arrive at the following result.
\begin{theorem}[Upper-triangularization of uniformly completely controllable two sided discrete time-varying systems]\label{Upper_Theorem} Consider an uniformly completely controllable two-sided discrete time-varying control system
\begin{equation}\label{Upper_Eq1}
x_{n+1}=A_nx_n+B_n u_n,\qquad\hbox{for } n\in\Z,
\end{equation}
where $ A=(A_n)_{n\in\mathbb Z}\in \mathcal L^{\rm Lya}(\Z,\R^{d\times d}), B=(B_n)_{n\in\mathbb Z}\in \mathcal L^{\infty}(\Z,\R^{d\times s})$. Let $(p^i_n)_{n\in\Z}, i=1,\dots,d,$ be arbitrary scalar positive Lyapunov sequences. Then, there exist a sequence of upper triangular matrices $(C_n)_{n\in\Z}\in \mathcal L^{\rm Lya}(\Z,\R^{d\times d})$, where $C_n=(c^{(n)}_{ij})_{1\leq i,j\leq d}$ with $c^{(n)}_{ii}=p^i_n$, and a bounded feedback control $U=(U_n)_{n\in\N}\in \mathcal L^{\infty}(\Z,\R^{s\times d})$ satisfying that the following systems
\begin{equation*}
x_{n+1}=(A_n+B_nU_n)x_n,\quad y_{n+1}=C_ny_n
\qquad \hbox{ for } n\in \Z
\end{equation*}
are kinematically equivalent.
\end{theorem}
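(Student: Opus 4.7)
The plan is to adapt the proof of \cite[Theorem 4.6]{Babiarz} from the one-sided time domain $\Z_{\geq 0}$ to the two-sided domain $\Z$. The construction in Babiarz is local in time, proceeding block-by-block on intervals of length $K$ (the horizon from the UCC property), so extending it to $\Z$ requires no fundamentally new ingredient, only a careful treatment of the blocks indexed by $m<0$.

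Concretely, I would partition $\Z$ into blocks $I_m := \{mK, mK+1, \dots, (m+1)K-1\}$ for $m\in\Z$. On each block, uniform complete controllability allows us to construct a feedback $U_n$, $n\in I_m$, so that the closed-loop block evolution $\Phi_{A+BU}((m+1)K, mK)$ realizes any prescribed invertible target matrix, with $\|U_n\|$ controlled uniformly by the target and the UCC constant $\alpha$ from Definition~\ref{UniformControllability}. Following Babiarz, one selects these targets consistently with an auxiliary sequence $(T_{mK})_{m\in\Z}$ at block boundaries and with the prescribed diagonal $(p^i_n)$, and interpolates $T_n, C_n$ within each block (via a QR-type decomposition of the block evolution) so that $T_{n+1}C_n = (A_n + B_n U_n) T_n$ holds with $C_n$ upper triangular and $c^{(n)}_{ii}=p^i_n$. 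The forward induction on $m\geq 0$, starting from $T_0 = I$, is carried out exactly as in \cite{Babiarz}; for $m<0$, I would run the symmetric backward induction, using the invertibility of the closed-loop block evolution — which holds because $A\in\mathcal L^{\rm Lya}(\Z,\R^{d\times d})$ and the prescribed diagonal $p^i_n$ is nonzero — to solve for $T_{mK}$ given $T_{(m+1)K}$.

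The main obstacle I expect is the global boundedness of $U$ on $\Z$ and the Lyapunov property of $T$ on $\Z$. Both follow from the uniformity already present in Babiarz's proof: since the UCC constants $K$ and $\alpha$ are independent of $k_0$, the feedback norm is uniformly bounded across all blocks; combined with $\|A\|_\infty, \|B\|_\infty <\infty$ and the Lyapunov property of each scalar sequence $(p^i_n)$, this propagates to uniform bounds on $T_n$ and $T_n^{-1}$ both as $n\to+\infty$ and as $n\to-\infty$. The required kinematic equivalence on $\Z$ then holds by construction of $U, T, C$, and the only essential new point compared to the one-sided case is the verification that the backward induction preserves these uniform bounds, which is the ``slight modification'' alluded to in the paragraph preceding the theorem.
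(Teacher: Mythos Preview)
Your proposal is correct and is essentially the approach the paper takes: the paper's proof consists solely of the reference ``See \cite[Theorem 4.6]{Babiarz}'', relying on the remark preceding the theorem that a slight modification of the one-sided argument in \cite{Babiarz} yields the two-sided statement. Your outline makes that modification explicit (blockwise construction on $I_m$, forward induction for $m\geq 0$ and backward induction for $m<0$, with uniform bounds coming from the $k_0$-independence of the UCC constants), which is exactly the content intended by the paper's one-line proof.
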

\begin{proof}
See \cite[Theorem 4.6]{Babiarz}.
\end{proof}
\subsection{Proof of the main result}\label{Subsection3.3}
\begin{proof}[Proof of Theorem \ref{MainTheorem}]
Let $[a_1,b_1],\dots,[a_{\ell},b_{\ell}]$, where $1\leq \ell\leq d$, be arbitrary disjoint closed intervals. For $1\leq i\leq \ell$, we define a positive scalar sequence $(p^i_n)_{n\in\Z}$ with $p^i_n=p^i_{-n}$ for $n\in\Z$ and
\begin{equation}\label{Contruction}
p^i_n=
\left\{
  \begin{array}{ll}
    e^{a_i}, & \hbox{for } n\in [2^{2m},2^{2m+1}), m\in\Z_{\geq 0} ; \\[1ex]
    e^{b_i}, & \hbox{for } n\in [2^{2m+1},2^{2m+2}), m\in\Z_{\geq 0};\\[1ex]
    0, & \hbox{for } n=0.
  \end{array}
\right.
\end{equation}
Consider the corresponding linear scalar system
\begin{equation}\label{CorrespondingSystem}
z_{n+1}=p^i_n z_n\qquad\hbox{ for } n\in\Z.
\end{equation}
By virtue of Proposition \ref{MainProposition}, the dichotomy spectrum of \eqref{CorrespondingSystem} satisfies that   $\Sigma_{\rm ED}(p^i)=\Sigma_{\rm ED}^{+}(p^i)$. By \eqref{Contruction}, it is obvious to see that $\Sigma_{\rm ED}^{+}(p^i)=[a_i,b_i]$ and then we arrive at
\begin{equation}\label{ComputationED}
\Sigma_{\rm ED}(p^i)=[a_i,b_i]\qquad\hbox{ for } i=1,\dots,\ell.
\end{equation}
For $\ell+1\leq i\leq d$, let $p^i_n=p^1_n$. According to Theorem \ref{Upper_Theorem}, there exists a bounded feedback control and a sequence of upper triangular matrices $(C_n)_{n\in\Z}\in \mathcal L^{\rm Lya}(\Z,\R^{d\times d})$, where $C_n=(c^{(n)}_{ij})_{1\leq i,j\leq d}$ with $c^{(n)}_{ii}=p^i_n$ such that
\begin{equation*}
x_{n+1}=(A_n+B_nU_n)x_n,\quad y_{n+1}=C_ny_n
\qquad \hbox{ for } n\in \Z
\end{equation*}
are kinematically equivalent. This together with Proposition \ref{MainProposition} and \eqref{ComputationED} implies that
\[
\Sigma_{\rm ED}(A+BU)=\Sigma_{\rm ED}(C)=\bigcup_{i=1}^d \Sigma_{\rm ED}(p^i)=\bigcup_{i=1}^{\ell}[a_i,b_i].
\]
The proof is complete.
\end{proof}
\section{Appendix}\label{Section4}
Consider a continuous time-varying linear system
\begin{equation}\label{ED_01_Cont}
\dot x(t)=W(t)x(t),\qquad t\in\R,
\end{equation}
where $W:\R\rightarrow \R^{d\times d}$ is measurable and bounded.
Let $\Phi_W(\cdot,\cdot):\R\times \R \rightarrow \R^{d\times d}$ denote the \emph{evolution operator} generated by \eqref{ED_01_Cont}, i.e. $\Phi(\cdot,s)\xi$ solves \eqref{ED_01_Cont} with the initial valued condition $x(s)=\xi$.  Next, we introduce the notion of one-sided and two-sided dichotomy spectrum of \eqref{ED_01_Cont}. These notions are defined in term of exponential dichotomy. Recall that system \eqref{ED_01} is said to admit an exponential dichotomy on $\T$, where $\T$ is either $\R,\R_{\geq 0}$ or $\R_{\leq 0}$, if there exist $K,\alpha>0$ and a family of projection $P:\T\rightarrow \R^{d\times d}$ such that for all $t,s\in\T$ we have
\[
\begin{array}{cll}
\|\Phi_W(t,s)P(s)\| & \leq K e^{-\alpha(t-s)} & \quad \hbox{ for } t\geq s;\\[1.5ex]
\|\Phi_W(t,s)(\id-P(s))\| & \leq K e^{\alpha(t-s)} & \quad \hbox{ for } t\leq s.
\end{array}
\]
\begin{definition}[Dichotomy spectrum for continuous-time varying linear systems]\label{DefinitionofED}
	The dichotomy spectrum of \eqref{ED_01} on $\R, \R_{\geq 0}, \R_{\leq 0}$ are defined, respectively, as follows
	\begin{eqnarray*}
		\Sigma_{\rm ED}(W)
		&:=&
		\big\{\gamma\in\R: \dot x=(W(t)-\gamma \id) x \hbox{ has no ED on } \R\big\}, 	\\[1.5ex]
		\Sigma_{\rm ED}^{+}(W)
		&:=&
		\big\{\gamma\in\R:  \dot x=(W(t)-\gamma \id) x \hbox{ has no ED on } \R_{\geq 0}\big\}, 	\\[1.5ex]
		\Sigma_{\rm ED}^{-}(W)
		&:=&
		\big\{\gamma\in\R:  \dot x=(W(t)-\gamma \id) x \hbox{ has no ED on } \R_{\leq 0}\big\}. 	
	\end{eqnarray*} 	
\end{definition}
It is proved in \cite{Siegmund,Kloeden} that $\Sigma_{\rm ED}(W)$ (also $\Sigma_{\rm ED}^{+}(W)$ and $\Sigma_{\rm ED}^{-}(W)$) is a compact set consisting of at most $d$ disjoint intervals.

Now, we introduce systems associated with \eqref{ED_01_Cont}. The following system
\begin{equation}\label{AssociatedEq}
x_{n+1}=A_n x_n, \qquad \hbox{ where } A_n:=\Phi_W(n+1,n),
\end{equation}
is called the \emph{$1$-time discrete time-varying linear system associated with \eqref{ED_01_Cont}}, see also \cite{Cuong}. Obviously, the evolution operator $\Phi_A(\cdot,\cdot):\Z\times\Z\rightarrow \R^{d\times d}$ is given by
\begin{equation}\label{AssociatedEvolution}
\Phi_A(m,n)=\Phi_W(m,n)\qquad\hbox{for } m,n\in\Z.
\end{equation}
The following lemma shows that the dichotomy spectral of \eqref{ED_01_Cont} and \eqref{AssociatedEq} coincide.
\begin{lemma}\label{TechnicalLemma}
The following statements hold
\[
\Sigma_{\rm ED}(W)=\Sigma_{\rm ED}(A), \Sigma^+_{\rm ED}(W)=\Sigma^+_{\rm ED}(A),\Sigma^{-}_{\rm ED}(W)=\Sigma^{-}_{\rm ED}(A).
\]
\end{lemma}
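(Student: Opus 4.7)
The plan is to prove a single equivalence that yields all three claimed equalities at once: for every $\gamma\in\R$ and every domain $\T\in\{\R,\R_{\geq 0},\R_{\leq 0}\}$, the shifted continuous system $\dot x=(W(t)-\gamma\id)x$ admits an exponential dichotomy on $\T$ if and only if the shifted discrete system $x_{n+1}=e^{-\gamma}A_nx_n$ admits an exponential dichotomy on $\T\cap\Z$. Taking complements in $\R$ then gives the three spectral equalities simultaneously. The two supporting facts I would use are: the evolution operator of the shifted continuous system equals $e^{-\gamma(t-s)}\Phi_W(t,s)$, while by \eqref{AssociatedEvolution} the evolution operator of the shifted discrete system equals $e^{-\gamma(m-n)}\Phi_W(m,n)$, so the $\gamma$-factor is identical on both sides and merely shifts the dichotomy rate; and boundedness of $W$ gives the cocycle bound $\|\Phi_W(t,s)\|\leq e^{M|t-s|}$ with $M:=\sup_{t\in\R}\|W(t)\|$.

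For the easy direction (continuous $\Rightarrow$ discrete), I would simply restrict the projection family and the dichotomy estimates to integer times $n\in\T\cap\Z$; the same constants $K,\alpha$ work. For the converse, starting from a discrete dichotomy with projections $(P_n)_{n\in\T\cap\Z}$ and constants $K,\alpha>0$, I would define
\[
P(t):=\Phi_W(t,\lfloor t\rfloor)\,P_{\lfloor t\rfloor}\,\Phi_W(\lfloor t\rfloor,t),\qquad t\in\T,
\]
(replacing $\lfloor\cdot\rfloor$ by $\lceil\cdot\rceil$ when $\T=\R_{\leq 0}$ so that the integer stays in $\T\cap\Z$). This is visibly a projection since it is a conjugate of $P_{\lfloor t\rfloor}$. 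The discrete invariance $P_{n+1}\Phi_W(n+1,n)=\Phi_W(n+1,n)P_n$ together with the cocycle property of $\Phi_W$ upgrades to the continuous invariance $P(t)\Phi_W(t,s)=\Phi_W(t,s)P(s)$ for all $t,s\in\T$.

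For the dichotomy estimates, given $t\geq s$ in $\T$, I would factor
\[
\Phi_W(t,s)P(s)=\Phi_W(t,\lfloor t\rfloor)\,\Phi_W(\lfloor t\rfloor,\lfloor s\rfloor)P_{\lfloor s\rfloor}\,\Phi_W(\lfloor s\rfloor,s),
\]
bound the two outer factors by $e^M$ using the cocycle bound, and apply the discrete stable estimate to the middle factor, obtaining $\|\Phi_W(t,s)P(s)\|\leq Ke^{2M+\alpha}e^{-\alpha(t-s)}$, where the extra $e^\alpha$ absorbs the at-most-unit gap between $\lfloor t\rfloor-\lfloor s\rfloor$ and $t-s$. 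The unstable estimate $\|\Phi_W(t,s)(\id-P(s))\|\leq Ke^{2M+\alpha}e^{\alpha(t-s)}$ for $t\leq s$ is obtained symmetrically.

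The only real obstacle is bookkeeping: verifying that the floor/ceiling construction stays inside $\T\cap\Z$ for each of the three domains, checking the continuous invariance from the discrete one via the cocycle property, and carefully tracking the constants in the above factorization. No new idea beyond the cocycle identity and the exponential bound $\|\Phi_W(t,s)\|\leq e^{M|t-s|}$ is required.
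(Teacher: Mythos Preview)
Your proposal is correct and follows essentially the same approach as the paper: restrict the continuous dichotomy to integer times for the easy direction, and for the converse define $P(t)$ by conjugating the discrete projection via $\Phi_W$ and factor through nearby integers using the Gronwall bound on $\|\Phi_W(t,s)\|$ for $|t-s|\leq 1$. The paper carries this out only for the two-sided case and appeals to ``similar arguments'' for the half-lines, whereas you address all three domains at once and are more explicit about the floor/ceiling choice on $\R_{\leq 0}$; otherwise the arguments are the same.
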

\begin{proof}
We only prove $\Sigma_{\rm ED}(W)=\Sigma_{\rm ED}(A)$ and by using similar arguments we also have $ \Sigma^+_{\rm ED}(W)=\Sigma^+_{\rm ED}(A),\Sigma^{-}_{\rm ED}(W)=\Sigma^{-}_{\rm ED}(A)$. We divide the proof of this fact into two steps:\\

\noindent \emph{Step 1}: We show that $\Sigma_{\rm ED}(A)\subset \Sigma_{\rm ED}(W)$. For this purpose, let $\gamma\not\in \Sigma_{\rm ED}(W)$ be arbitrary. Then, by Definition \ref{DefinitionofED} and the fact that $e^{-\gamma(t-s)}\Phi_W(t,s)$ is the evolution operator of the shifted systems
\[
\dot x=(W(t)-\gamma \id) x,
\]
there exist $K,\alpha>0$ and a family of projection $P:\R\rightarrow \R^{d\times d}$ such that
\[
\begin{array}{cll}
\|\Phi_W(t,s)P(s)\| & \leq K e^{(\gamma-\alpha)(t-s)} & \quad \hbox{ for } t\geq s,\\[1.5ex]
\|\Phi_W(t,s)(\id-P(s))\| & \leq K e^{(\gamma+\alpha)(t-s)} & \quad \hbox{ for } t\leq s.
\end{array}
\]
In particular, by letting $P_n:=P(n)$ for $n\in\Z$ and \eqref{AssociatedEvolution} we arrive at the following properties of the evolution $\Phi_A(m,n)$ generated by \eqref{AssociatedEq}
\[
\begin{array}{cll}
\|\Phi_A(m,n)P_n\| & \leq K e^{(\gamma-\alpha)(m-n)} & \quad \hbox{ for } m\geq n,\\[1.5ex]
\|\Phi_A(m,n)(\id-P_n\| & \leq K e^{(\gamma+\alpha)(m-n)} & \quad \hbox{ for } m\leq n.
\end{array}
\]
Consequently, the shifted discrete time-varying system
\[
x_{n+1}=e^{-\gamma}A_nx_n,\qquad n\in\Z,
\]
exhibits an exponential dichotomy. Thus, $\gamma\not\in \Sigma_{\rm ED}(A)$.\\

\noindent \emph{Step 2}: We show that $\Sigma_{\rm ED}(W)\subset \Sigma_{\rm ED}(A)$. For this purpose, let $\gamma\not\in \Sigma_{\rm ED}(A)$ be arbitrary. By Definition \ref{DefinitionofED}, there exist $K,\alpha>0$ and a family of projection $(P_n)_{n\in\Z}$ of $\R^{d\times d}$ such that
\begin{equation}\label{Estimate1}
\begin{array}{cll}
\|\Phi_A(m,n)P_n\| & \leq K e^{(\gamma-\alpha)(m-n)} & \quad \hbox{ for } m\geq n,\\[1.5ex]
\|\Phi_A(m,n)(\id-P_n\| & \leq K e^{(\gamma+\alpha)(m-n)} & \quad \hbox{ for } m\leq n.
\end{array}
\end{equation}
We define a map $P:\R\rightarrow \R^{d\times d}$ by
\[
P(t):=\Phi_{W}(t,n)P_n\Phi_W(n,t)\qquad \hbox{ for } t \in [n,n+1), n\in\Z.
\]
Since $W(\cdot)$ is measurable and essentially bounded, i.e. $\mbox{ess}\sup_{t\in\R}\|W(t)\|<\infty$, it follows with Gronwall's inequality that
\[
\kappa:=\sup_{|t-s|\leq 1} \|\Phi_W(t,s)\|<\infty.
\]
Thus, for any $t\geq s$ by letting $m:=\left\lceil t \right\rceil$ (the smallest integer number greater or  equal $t$), $ n:=\left\lfloor s \right\rfloor$ (the largest integer number smaller or equal $s$) and \eqref{Estimate1} we have
\begin{eqnarray*}
\|\Phi_W(t,s)P(s)\|
&=&
\|\Phi_W(t,s)\Phi_W(s,n)P_n\Phi_W(n,s)\|\\[0.5ex]
&\leq &
\kappa^2 \|\Phi_W(m,n)P_n\|\\[0.5ex]
&\leq&
\kappa^2 K e^{2|\gamma-\alpha|} e^{(\gamma-\alpha)(t-s)}.
\end{eqnarray*}
Similarly, for $t\leq s$ we have
\[
\|\Phi_W(t,s)(\id-P(s))\|
\leq
\kappa^2 K e^{2|\gamma+\alpha|} e^{(\gamma+\alpha)(t-s)},
\]
which implies that the shifted continuous time-varying system
\[
\dot x= (W(t)-\gamma \id) x
\]
exhibits an exponential dichotomy. Thus, $\gamma\not\in \Sigma_{\rm ED}(W)$ and the proof is complete.
\end{proof}
\section*{Acknowledgement}
This research is funded by Vietnam National University of Civil Engineering (NUCE) under grant number 202-2018/KHXD-TD. Our attention to the problem of assignability for dichotomy spectrum of linear discrete time-varying systems initiated from the visit of Dr. Artur Babiarz, Prof. Adam Czornik and Dr. Michal Niezabitowski to Institute of Mathematics, VAST in 2017. The authors thank them for an interesting  and highly motivated introduction of this problem.
\section*{References}

\end{document}